\newtheorem{theorem}{Theorem}[section]
\newtheorem{lemma}[theorem]{Lemma}
\theoremstyle{defn}
\newtheorem{defn}[theorem]{Definition}
\newtheorem{prop}[theorem]{Proposition}
\newtheorem{example}[theorem]{Example}
\theoremstyle{remark}
\numberwithin{equation}{section}
\newcommand{\Rmnum}[1]{\expandafter\@slowromancap\romannumeral #1@}
\begin{document}
\title{FINITE GENERATION OF THE COHOMOLOGY OF QUOTIENTS OF PBW ALGEBRAS}
\date{\today}
\subjclass[2010]{16E40, 16S15, 16W70.}

\address{Department of Mathematics, Texas A\&M University,
College Station, Texas 77843, USA}
\email{piyushilashroff@gmail.com}

\author{Piyush Shroff} 
\maketitle
\begin{abstract}
In this article we prove finite generation of the cohomology of quotients of a PBW algebra $A$ by relating it to the cohomology of quotients of a quantum symmetric algebra $S$ which is isomorphic to the associated graded algebra of $A$. The proof uses a spectral sequence argument and a finite generation lemma adapted from Friedlander and Suslin.
\end{abstract}
\begin{section}{Introduction}
\indent The cohomology ring of a finite group is finitely generated, as proven by Evens [8], Golod [10] and Venkov [21]. The door to use geometric methods in the study of cohomology and modular representations of finite groups was opened due to this fundamental result. The cohomology ring of any finite group scheme (equivalently, finite dimensional cocommutative Hopf algebra) over a field of positive characteristic is finitely generated, as proven by Friedlander and Suslin [9] which is a  generalization of the result of Venkov and Evens. In [12], Ginzburg and Kumar proved that cohomology of quantum groups at roots of unity is finitely generated. In [7], Etingof and Ostrik conjectured finite generation of cohomology in the context of finite tensor categories. The task of proving this conjecture was done by Mastnak, Pevtsova, Schauenburg and Witherspoon [16] for some classes of noncocommutative Hopf algebras over a field of characteristic~0 .\\
\indent In [16], Mastnak, Pevtsova, Schauenburg and Witherspoon considered the Nichols algebra $R$. A finite filtration on $R$ is used to define a spectral sequence to which they apply a finite generation lemma adapted from [9]. In order to do so, they define 2-cocycles on $R$ that are identified with permanent cycles in the spectral sequence. Finally, they identify the permanent cycles belonging to the degree 2 cohomology of the associated graded algebra of $R$ with elements in the cohomology of $S$ (where $S$ is a quantum symmetric algebra subject to the relation $x_i^{N_i} = 0$ for all $i$) constructed in Section 4 of [16].\\
\indent In this article, we generalize the work done by Mastnak, Pevtsova, Schauenburg and Witherspoon [16] by choosing our parameters that are not necessarily roots of unity and we allow non-nilpotent generators. Also we deal with PBW algebras in general, whereas in [16] authors looked at those that arise from subalgebras of pointed Hopf algebras. Let $k$ be a field, usually assumed to be algebraically closed and of characteristic 0. Let $B$ be a PBW algebra over $k$ generated by $x_1,\cdots,x_{t},\cdots,x_n$ and $A=B/(x_1^{N_1},\cdots,x_{t}^{N_{t}})$ where for each $i,\ 1\leq i\leq t$, $N_i$ is an integer greater than 1 and $x_i^{N_i}$ is in the braided center. Our proof of finite generation of cohomology of the algebra $A$, is a two step procedure. First, we compute cohomology explicitly via a free $S$-resolution where $S$ is a quotient of a quantum symmetric algebra by the ideal generated by $x_1^{N_1},\cdots,x_{t}^{N_{t}}$ where $1\leq t\leq n$. Second, our algebra $A$ has a filtration [5,~Theorem~4.6.5] for which the associated graded algebra $(\text{Gr} A)$ is $S$.\\
\indent This work can be applied to Frobenius-Lusztig kernels studied by Drupieski [6], pointed Hopf algebras studied by Helbig [13] and algebras studied by Liu [15].\\\\
{\bf Notation:} H$^r(A,k)= \text{Ext}_A^{r}(k,k)$ and H$^*(A,k)=\bigoplus_{r\geq 0}\text{H}^r(A,k)$.\\\\
{\bf Main Theorem:} The cohomology algebra H$^{*}(A,k)$ is finitely generated.\\\\
\indent We use the techniques of Mastnak, Pevtsova, Schauenberg and Witherspoon [16] to yield results in this general setting. However, some difference do arise, namely we cannot apply [16, Lemma 2.5] as it is since our parameters are not necessarily roots of unity.\\\\
We now describe the contents of this article:\\\\
\indent In Section 2 we define PBW algebras. In addition, we introduce a result from Evens [8] and a non-commutative version of a finite generation lemma adapted from Friedlander and Suslin [9].\\
\indent In Section 3 we prove that cohomology of quotients of a quantum symmetric algebra $S$ is finitely generated.\\
\indent Section 4 introduces a 2-cocycle on the algebra $A$. In Section 5 we prove that cohomology of the algebra $A$ is finitely generated.\\
\end{section}
\begin{section}{Definitions and Preliminary Results}
\begin{subsection}{PBW Algebras}
In this subsection we recall some basic definitions including that of a PBW algebra.
\begin{defn} 
{\em An admissible ordering on $\mathbb{N}^n$ is a total ordering $<$ such that}\\
{\em 1) if $\alpha < \beta \text{ and } \gamma \in \mathbb{N}^n$ then $\alpha+\gamma < \beta+\gamma$}\\
{\em 2) $<$ is a well ordering}.
\end{defn}
This definition provides one-to-one correspondence between $\mathbb{N}^n$ and monomials in $k[x_1,\cdots,x_n]$. Also, it helps us to compare each monomials to establish their proper relative positions. Some examples of ordering on $n$-tuples include:
\begin{example} 
{\em Let $\alpha=(\alpha_1,\cdots,\alpha_n)$ and $\beta=(\beta_1,\cdots,\beta_n)\in \mathbb{N}^{n}$. The lexicographic order $<_{lex}$ on $\mathbb{N}^n$ is defined by letting $\beta <_{lex}\alpha$ if the first non zero entry of $\alpha-\beta\in \mathbb{Z}^{n}$ is positive}. 
\end{example}
For more examples of ordering on $n$-tuples we refer reader to [5].\\\\
\indent In light of this definition and example we define a PBW algebra.\\\\
{\bf Poincar\'{e}-Birkhoff-Witt Algebra:} A PBW algebra $R$, over a field $k$, is a $k$-algebra together with elements $x_1,\cdots,x_n \in R$ and an admissible order on $\mathbb{N}^n$ for which there are scalars $q_{ij}\in k^{*}$ such that \\
1) $\{x_1^{\alpha_1}\cdots x_n^{\alpha_n}\mid (\alpha_1,\cdots,\alpha_n)\in \mathbb{N}^n\}$ is a basis of $R$ as a $k$-vector space. We call this basis the PBW basis.\\
2) $x_ix_j = q_{ij}x_jx_i + p_{ij}$ for $p_{ij}\in$ $R$, $1\leq i<j\leq n$ where degree of $p_{ij}$ is smaller than that of $x_ix_j$ for the choice of ordering.\\\\
Let us now give some examples of PBW algebras.
\begin{example} 
{\em 1) The polynomial ring $R = k[x_1,x_2,\cdots,x_n]$ is a PBW algebra.\\
2) There are some quantum groups which are PBW algebras. For example:\\ a) The quantum plane $k_q[x,y] = k\langle x,y \mid yx = qxy\rangle$\\ 
b) $U_q(sl_3)^{+}:= k\langle x_1, x_2, x_3 \mid x_{1}x_2 = qx_{2}x_1,\ x_{2}x_3 = qx_{3}x_2,\ x_{1}x_3 = q^{-1}x_{3}x_{1} + x_2\rangle$\\
3) Quantum Symmetric Algebra: Let $k$ be a field. Let $n$ be a positive integer and for each pair $i,j$ of elements in $\{1,\cdots,n\}$, let $q_{ij}$ be a nonzero scalar such that $q_{ii} = 1$ and $q_{ji} = q_{ij}^{-1}$ for all $i,j$. Denote by ${\bf q}$ the corresponding tuple of scalars, ${\bf q} := (q_{ij})_{1\leq i<j\leq n}.$ Let $V$ be a vector space with basis $x_1,\cdots,x_n$, and let
$$ S_{\bf q}(V) := k\langle x_1,\ldots,x_n \mid x_ix_j = q_{ij}x_jx_i  \mbox{ for all }
      1\leq i<j\leq n \rangle,$$
the} quantum symmetric algebra {\em(quantum polynomial ring)} {\em determined by $\bf q$}.
\end{example}
\noindent {\bf The $\omega$-filtration of a PBW algebra:}\\
Let $\omega = (\omega_1,\cdots,\omega_n)\in \mathbb{N}^n$. For $0\neq f$ belonging to a PBW algebra $R$ we define its $\omega$-degree as
$$\text{deg}_\omega(f) = \text{max}\{|\alpha|_\omega\mid \alpha\in \mathcal{W}\}$$
where $|\alpha|_\omega =\alpha_1\omega_1+\cdots+\alpha_n\omega_n$, $f=\sum_{\alpha\in \mathbb{N}^n}c_{\alpha}x^{\alpha}$ and $\mathcal{W} =\{\alpha\in \mathbb{N}^n\mid c_\alpha\neq 0\}$. With these notations we define the $\omega$-filtration of a PBW algebra as
$$ F_s^{\omega}R=\{f\in R\mid |\alpha|_\omega\leq s \text{ for all } \alpha\in \mathcal{W}\}$$
where $s$ is any nonnegative integer (See [5]).\\
\end{subsection}
\begin{subsection}{Noetherian Modules}
Given a ring $R$, a decreasing filtration $F^{n}R$ for $n\in\mathbb{N}$ is called compatible with the ring structure on $R$ if $\ F^{m}R\cdot F^{n}R\subset F^{m+n}R$, for all $m, n\in \mathbb{N}$. The ring $R$ with this filtration is then called a filtered ring (See [4]). Let $R=F^{0}R\supseteq F^{1}R\supseteq \cdots \supseteq F^{s}R\supseteq \cdots$ be a graded filtered ring. Note that by definition, the grading on $R$ is compatible with its ring structure in the usual way that is $R=\bigoplus_{n\in\mathbb{N}}R^n$, and $R^nR^m\subset R^{n+m}$. Then we may form the doubly graded ring $$E_0(R) = \sum_i F^{i}R/F^{i+1}R.$$ Similarly we may form the doubly graded module $E_0(N)$ over $E_0(R)$ if $N$ is a graded filtered module over $R$ (with the module structure consistent with  the ring structure in the usual way that is $N=\bigoplus_{i\in \mathbb{N}}N^i$, and $R^iN^j\subset N^{i+j}$).\\
\indent For the current purposes it is sufficient to consider filtrations such that $F^{i}R^n = 0$ for $i$ sufficiently large where $n$ denotes the grading on $R$. Similarly, $F^{i}N^j = 0$ for $i$ sufficiently large.\\\\
\indent Now we define a couple of terms and recall the following proposition of Evens [8].
\begin{defn} {\em 1) A submodule $S\subset N$ is said to be homogeneous if it is generated by homogeneous elements (i.e. the elements from homogeneous summands $N^i$).\\\\
2) An $R$-submodule $N$ of a graded $R$-module $M$ is called a graded $R$-submodule of $M$ if we have $N=\bigoplus_s{(N\cap M^s)}.$\\\\
3) If $\{F^{s}M\}$ is a filtration of the $R$-module $M$, and $N$ is a submodule of $M$, then we have a filtration induced on $N$, given by $F^{s}N = N\cap F^{s}M$}.\\
\end{defn}
\begin{prop}\label{Pch2}
Let $R$ be a graded filtered ring i.e. $$R=F^{0}R\supseteq F^{1}R\supseteq \cdots \supseteq F^{s}R\supseteq \cdots$$ and $N$ a graded filtered $R$ module i.e. suppose $$N=F^{0}N\supseteq F^{1}N\supseteq \cdots \supseteq F^{s}N\supseteq \cdots$$ over $R$. If $E_0(N)$ is (left) Noetherian over $E_0(R)$, then $N$ is Noetherian over $R$.
\end{prop}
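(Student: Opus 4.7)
The plan is to show every graded submodule $N' \subseteq N$ is finitely generated over $R$, which suffices for Noetherianness via the standard graded-to-ungraded reduction for $\mathbb{N}$-graded filtered rings. I would first equip $N'$ with the induced filtration $F^s N' := N' \cap F^s N$; since $N'$ is graded, this filtration respects both the grading and the filtration structure. The canonical maps $(N' \cap F^s N)/(N' \cap F^{s+1} N) \hookrightarrow F^s N / F^{s+1} N$ are injective and assemble into an embedding $E_0(N') \hookrightarrow E_0(N)$ of bigraded $E_0(R)$-modules. The hypothesis that $E_0(N)$ is Noetherian then forces $E_0(N')$ to be finitely generated, so I can choose finitely many bigraded-homogeneous generators $\bar{y}_1, \ldots, \bar{y}_k$, with $\bar{y}_i$ sitting in the bigraded piece of bidegree $(s_i, n_i)$, and lift each to a representative $y_i \in F^{s_i} N' \cap N^{n_i}$.

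The heart of the argument is a successive-approximation procedure showing that $y_1, \ldots, y_k$ generate $N'$ over $R$. It suffices to treat a homogeneous element $y \in N' \cap N^n$, since any element splits into finitely many such components. Let $s$ be the largest integer with $y \in F^s N' \cap N^n$; the image $\bar{y}$ in $(F^s N'/F^{s+1} N') \cap N^n$ decomposes as $\bar{y} = \sum_i \bar{r}_i \bar{y}_i$ with each $\bar{r}_i$ bigraded of bidegree $(s - s_i, n - n_i)$. Lifting each $\bar{r}_i$ to $r_i \in F^{s-s_i} R \cap R^{n-n_i}$ produces a new element $y - \sum_i r_i y_i \in F^{s+1} N' \cap N^n$; crucially, the grade $n$ is preserved at every step of the lift.

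The principal obstacle is to guarantee that iterating this lifting actually terminates after finitely many steps, rather than merely pushing the remainder into ever-deeper levels of the filtration. This is precisely the content of the boundedness assumption: since $F^i N^n = 0$ for $i$ sufficiently large, the induced filtration on the single graded piece $N^n$ has finite length, so once the filtration index exceeds that length the remainder must vanish. Collecting the coefficients across these finitely many iterations yields an explicit finite expression $y = \sum_i r_i y_i$ with $r_i \in R$, proving that $N'$ is finitely generated and hence that $N$ is Noetherian over $R$.
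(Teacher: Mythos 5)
Your proof is correct and reconstructs the standard argument behind this proposition, which the paper itself does not spell out but delegates to [8, Section~2, Prop.~2.1] and [20]. The two-stage structure you use --- first reducing Noetherianness to finite generation of graded submodules (implicitly via the initial-form comparison $N \mapsto \mathrm{in}(N)$), then lifting a finite set of bihomogeneous generators of $E_0(N')$ using the per-degree boundedness $F^i N^n = 0$ for $i \gg 0$ to make the successive approximation terminate --- is precisely the Evens argument the paper cites.
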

\begin{proof}
See [8, Section 2, Proposition 2.1] and [20, Chapter 2].
\end{proof}
\vspace*{.5cm}
\noindent{\bf A finite generation lemma.} In Section 5, we will need the following general lemma which is a non-commutative version of [16, Lemma 2.5] and is originally adapted from [9,~Lemma~1.6]. Recall that an element $x \in E_r^{p,q}$ is called a $permanent \ cycle$ if $d_i(x)=0$ for all $i\geq r.$ More precisely, if $i> r,\ d_i$ is applied to the image of $x$ in $E_i$.\\
\begin{lemma}\label{Lch2}
a) Let $E_1^{p,q} \Rightarrow E_{\infty}^{p+q}$ be a multiplicative spectral sequence of bigraded $k$-algebras concentrated in the half plane $p+q \geq 0$ and let $C^{*,*}$ be a bigraded $k$-algebra. For each fixed $q$, assume that $C^{p,q} = 0$ for $p$ sufficiently large. Assume that there exists a bigraded map of algebras $\phi: C^{*,*}\rightarrow E_1^{*,*}$ such that \\
1) $\phi$ makes $E_1^{*,*}$ into a left Noetherian $C^{*,*}$-module, and\\
2) the image of $C^{*,*}$ in $E_1^{*,*}$ consists of permanent cycles.\\
Then $E_\infty^{*}$ is a left Noetherian module over Tot($C^{*,*}$).\\
b) Let $\widetilde{E}_1^{p,q} \Rightarrow \widetilde{E}_\infty^{p+q}$ be a spectral sequence that is a bigraded module over the spectral sequence $E^{*,*}$. Assume that $\widetilde{E}_1^{*,*}$ is a left Noetherian module over $C^{*,*}$ where $C^{*,*}$ acts on $\widetilde{E}_1^{*,*}$ via the map $\phi$. Then $\widetilde{E}_\infty^{*}$ is a finitely generated $E_\infty^{*}$-module.
\end{lemma}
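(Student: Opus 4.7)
The plan is to adapt the Friedlander--Suslin argument [9, Lemma 1.6], propagating the $C^{*,*}$-action through every page of the spectral sequence and then realising $E_\infty^{*,*}$ as a subquotient of $E_1^{*,*}$ that inherits the action.

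The first step is to show that every $E_r^{*,*}$ is a Noetherian $C^{*,*}$-module. Since each $d_r$ is a graded derivation and the image $\phi(C^{*,*})$ consists of permanent cycles, the Leibniz rule $d_r(\phi(c)\cdot x) = \pm \phi(c)\cdot d_r(x)$ makes $d_r$ into a $C^{*,*}$-module map. Hence $E_{r+1}^{*,*} = \ker d_r/\operatorname{im} d_r$ inherits a $C^{*,*}$-module structure via a map $\phi_r:C^{*,*}\to E_r^{*,*}$ induced by $\phi$, and an induction on $r$ (using that subquotients of Noetherian modules are Noetherian) yields the claim.

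Next I would pull the filtration back to $E_1^{*,*}$: set $Z_r^{*,*}, B_r^{*,*} \subseteq E_1^{*,*}$ to be the preimages of the $r$-th cycles and boundaries, so that $Z_r^{*,*}/B_r^{*,*}\cong E_{r+1}^{*,*}$ and both are $C^{*,*}$-submodules by the previous step. The ascending chain $B_1 \subseteq B_2 \subseteq \cdots$ stabilises at some $B_\infty^{*,*}$ by Noetherianness of $E_1^{*,*}$. For the descending chain $Z_1 \supseteq Z_2 \supseteq \cdots$ I would invoke the finiteness hypothesis that $C^{p,q}=0$ for $p$ sufficiently large (each fixed $q$). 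Since $E_1^{*,*}$ is finitely generated over $C^{*,*}$ the same vanishing descends to $E_1^{*,*}$, and combined with the half-plane support condition $p+q \geq 0$ this forces, in each bidegree $(p,q)$, all but finitely many differentials in or out to vanish; thus $Z_r^{p,q}$ stabilises to some $Z_\infty^{p,q}$, and $E_\infty^{p,q}\cong Z_\infty^{p,q}/B_\infty^{p,q}$. Consequently $E_\infty^{*,*}$ is a quotient of the $C^{*,*}$-submodule $Z_\infty^{*,*}\subseteq E_1^{*,*}$, hence Noetherian over $C^{*,*}$, and regrading by total degree gives the claimed Noetherianness of $E_\infty^{*}$ over $\operatorname{Tot}(C^{*,*})$.

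For part (b) the same machinery applies to the module spectral sequence $\widetilde{E}^{*,*}$: compatibility with the $E^{*,*}$-action makes each $\widetilde{d}_r$ into a $C^{*,*}$-linear map, so each $\widetilde{E}_r^{*,*}$ is a Noetherian (hence finitely generated) $C^{*,*}$-module, and the cycle/boundary analysis realises $\widetilde{E}_\infty^{*,*}$ as a finitely generated $C^{*,*}$-module. Since $\phi$ factors through permanent cycles, its image descends to $E_\infty^{*}$, so finite generation over $\operatorname{Tot}(C^{*,*})$ upgrades to finite generation over $E_\infty^{*}$, as required.

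The main obstacle will be the stabilisation of the descending chain $Z_r^{*,*}$, since Noetherianness of $E_1^{*,*}$ only controls ascending chains. This is where the hypothesis that $C^{p,q}$ vanishes for large $p$ (together with finite generation of $E_1^{*,*}$ over $C^{*,*}$) becomes essential: it must be combined with the half-plane support condition to force convergence of the spectral sequence bidegree by bidegree. Packaging this interplay carefully, and then transferring the bigraded Noetherianness to the total-degree Noetherianness demanded by the statement, is the most delicate part of the argument.
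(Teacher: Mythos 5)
Your broad plan — propagate the $C^{*,*}$-action across pages via the Leibniz rule and the permanent-cycle hypothesis, and realise $E_\infty^{*,*}$ as a subquotient of $E_1^{*,*}$ — is the right one and matches the paper in spirit. But two of your steps need repair.

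First, the argument for stabilisation of the descending chain $Z_1\supseteq Z_2\supseteq\cdots$ does not work as stated. The outgoing differential $d_r$ on $E_r^{p,q}$ lands in $E_r^{p+r,\,q-r+1}$; as $r$ grows the $p$-coordinate of the target goes to $+\infty$ but the $q$-coordinate goes to $-\infty$, so the hypothesis ``$C^{p,q}=0$ for $p$ sufficiently large, each fixed $q$'' says nothing about these targets, and the half-plane condition only controls total degree, which the differentials preserve up to $\pm1$. The same problem occurs for incoming differentials. Fortunately, stabilisation of the $Z_r$ is not actually needed: $Z_\infty^{*,*}=\bigcap_r Z_r^{*,*}$ is already a $C^{*,*}$-submodule of the Noetherian module $E_1^{*,*}$, so it is Noetherian, and the convergence built into the hypothesis $E_1\Rightarrow E_\infty$ already gives $E_\infty^{p,q}\cong Z_\infty^{p,q}/B_\infty^{p,q}$. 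The paper avoids the issue entirely by a cleaner device (due essentially to Friedlander--Suslin): it works with the subalgebra $\Lambda_r^{*,*}$ of permanent cycles in $E_r^{*,*}$, observes that the natural maps give surjections $\Lambda_1\twoheadrightarrow\Lambda_2\twoheadrightarrow\cdots$, and notes that the kernels of $\Lambda_1\twoheadrightarrow\Lambda_r$ form an \emph{ascending} chain in the Noetherian module $\Lambda_1$. Thus the $\Lambda_r$ stabilise, whence $d_r(E_r)=0$ from some page onward, $E_r=E_{r+1}=\cdots$, and $\Lambda_r=E_\infty^{*,*}$. Since $\Lambda_r\cong Z_\infty/B_{r-1}$, your argument and theirs are the same module-theoretic facts in different coordinates, but theirs makes the use of Noetherianness visible and avoids any unsupported claims about differentials vanishing.

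Second, and more importantly, you treat the passage from bigraded Noetherianness of $E_\infty^{*,*}$ over $C^{*,*}$ to Noetherianness of $E_\infty^{*}$ over $\mathrm{Tot}(C^{*,*})$ as a mere ``regrading,'' while flagging it as delicate. This is exactly where the hypothesis that $C^{p,q}=0$ for $p$ sufficiently large (each fixed $q$) is actually used in the paper: it guarantees that the filtration of each total-degree piece $\mathrm{Tot}(C)^n=\bigoplus_{p+q=n}C^{p,q}$ by $p\ge0,1,2,\dots$ is finite, so that Proposition~\ref{Pch2} (Evens' lemma that a filtered module is Noetherian if its associated graded is) applies to the pair $\bigl(\mathrm{Tot}(C^{*,*}),\,E_\infty^{*}\bigr)$ with associated graded $\bigl(C^{*,*},\,E_\infty^{*,*}\bigr)$. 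In your write-up the hypothesis on $C^{p,q}$ is spent on the (flawed) $Z_r$-stabilisation step, leaving nothing to power the transfer to total degree, which is precisely the point at which the lemma would otherwise fail. You should move the use of that hypothesis to the final step and cite the Evens-type proposition explicitly; your part (b) inherits the same gap and is fixed the same way.
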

\begin{proof}
Let $\Lambda_r^{*,*}\subset E_r^{*,*}$ be the bigraded subalgebra of permanent cycles in $E_r^{*,*}.$\\
We claim first that $d_r(E_r^{*,*})\subset \Lambda_r^{*,*}$. In order to see this note that $d_r(E_r^{*,*})=$ im($d_r$). Therefore, $d_r(E_r^{*,*})\subset$ Ker\ $d_{r+1}$. Hence, $d_{r+1}(d_r(E_r^{*,*}))=0$. Similarly, $d_{r+2}(d_r(E_r^{*,*}))=0$ and so on. Thus, we have $d_i(d_r(E_r^{*,*})) = 0$ for all $i\geq r$. Hence, $d_r(E_r^{*,*})\subset \Lambda_r^{*,*}.$\\
\indent Next we claim that for all $\lambda \in \Lambda_r^{*,*}$ and $\mu\in E_r^{*,*}$, $\lambda\cdot d_r(\mu) \in d_r(E_r^{*,*})$ that is, $d_r(E_r^{*,*})$ is a left ideal of $\Lambda_r^{*,*}$. Consider,
\begin{eqnarray*} 
&&\hspace{-2.27cm}d_r(\lambda\cdot \mu)= d_r(\lambda)\mu + (-1)^{p+q}\lambda\cdot d_r(\mu)\ \text{ where } \lambda \in \Lambda^{p,q}\\
&=&0 + (-1)^{p+q}\lambda\cdot d_r(\mu)
\end{eqnarray*}
So $\lambda\cdot d_r(\mu) \in d_r(E_r^{*,*}).$ Thus $d_r(E_r^{*,*})$ is a left ideal of $\Lambda_r^{*,*}.$\\
\indent Now the image of $C^{*,*}$ is contained in each page of the spectral sequence and by assumption it consists of permanent cycles. Hence, we can  similarly conclude as above that $d_r(E_r^{*,*})$ is a $C^{*,*}$-submodule.\\
\indent A similar computation as above shows that $\Lambda_1^{*,*}$ is a $C^{*,*}$-submodule of $E_1^{*,*}$. To see this let $a\in C^{p,q}$; therefore, $\phi(a) \in E_1^{*,*}$ and $\lambda_1\in \Lambda_1^{*,*}$. Consider,
\begin{eqnarray*}
&&\hspace{-2.68cm}d_i(\phi(a)\lambda_1)= d_i(\phi(a))\lambda_1 + (-1)^{p+q}\phi(a)d_i(\lambda_1)\ \text{ where } i\geq 1\\ 
&=&0+0 = 0
\end{eqnarray*}
So $\phi(a)\lambda_1\in \Lambda_1^{*,*}$. Thus $\Lambda_1^{*,*}$ is an $C^{*,*}$-submodule.\\ 
\indent By induction, $\Lambda_{r+1}^{*,*} = \Lambda_r^{*,*}/d_r(E_r^{*,*})$ is an $C^{*,*}$-module for any $r\geq 1$ because $d_r(E_r^{*,*})\subset \Lambda_r^{*,*}$ and by the induction hypothesis $\Lambda_r^{*,*}$ is a $C^{*,*}$-module. Therefore,\\ $\Lambda_r^{*,*}/d_r(E_r^{*,*})$ is a $C^{*,*}$-module that is, $\Lambda_{r+1}^{*,*}$ is a $C^{*,*}$-module.\\
\indent We get a sequence of surjective maps of $C^{*,*}$-modules:
\begin{equation}
\Lambda_1^{*,*}\twoheadrightarrow\Lambda_2^{*,*}\twoheadrightarrow\cdots \twoheadrightarrow\Lambda_r^{*,*}\twoheadrightarrow\Lambda_{r+1}^{*,*}\twoheadrightarrow\cdots\\
\end{equation}
Since $\Lambda_1^{*,*}$ is a $C^{*,*}$-submodule of $E_1^{*,*}$, it is Noetherian as a $C^{*,*}$-module. Therefore, the kernels of the maps $\Lambda_1^{*,*}\twoheadrightarrow\Lambda_r^{*,*}$ are Noetherian for all $r\geq 1$. These kernels form an increasing chain of submodules of $\Lambda_1^{*,*}$; hence, by the Noetherian property, they stabilize after finitely many steps; that is, $\Lambda_r^{*,*} = \Lambda_{r+1}^{*,*} = \cdots$ for some $r$. We conclude that $\Lambda_r^{*,*} = E_\infty^{*,*}$. Therefore $E_\infty^{*,*}$ is a Noetherian $C^{*,*}$-module. Also, both $E_\infty^{*,*}$ and $C^{*,*}$ are filtered algebras and the filtration for each $n$ is given by:
$$E_{\infty}^n = \bigoplus_{p+q=n}E_{\infty}^{p,q}\supseteq \bigoplus_{\substack{p+q=n\\p\geq 1}}E_{\infty}^{p,q}\supseteq \bigoplus_{\substack{p+q=n\\ p\geq 2}}E_{\infty}^{p,q}\supseteq\cdots$$
and $E_\infty^{*,*}$ is the associated graded algebra. Similarly, for each $n$:
$$C^n=\bigoplus_{p+q=n}C^{p,q}\supseteq \bigoplus_{\substack{p+q=n\\p\geq 1}}C^{p,q} \bigoplus_{\substack{p+q=n\\ p\geq 2}}C^{p,q}\supseteq\cdots$$
and $C^{*,*}$ is the associated graded algebra.\\
\indent For $p$ sufficiently large, $C^{p,q} = 0$. Hence, by proposition \ref{Pch2}, $E_\infty^{*}$ is a Noetherian module over Tot($C^{*,*}$).\\\\
(b) Similarly, we can show that $\widetilde{E}_\infty^{*,*}$ is Noetherian over $C^{*,*}$. Again, by applying Proposition~\ref{Pch2}, we can conclude that $\widetilde{E}_\infty^{*}$ is Noetherian and hence finitely generated over Tot($C^{*,*}$). Therefore, by part (a) $\widetilde{E}_\infty^{*}$ is a Noetherian module over $E_\infty^{*}$. Hence, $\widetilde{E}_\infty^{*}$ is finitely generated over $E_\infty^{*}$.\\
\end{proof}
\end{subsection}
\end{section}
\begin{section}{Cohomology of Quotients of Quantum Symmetric Algebras}
\indent For this section we will use the same terminology as used by Mastnak, Pevtsova, Schauenburg and Witherspoon in Section 4 of [16]. We will make some modifications to their method to accommodate non-nilpotent generators. This will enable us to generalize their method.\\\\
\indent Let $n,\ t$ with $t\leq n$ be positive integers, and for each $i,\ 1\leq i\leq t$, let $1< N_i\in \mathbb{Z}$. Let $q_{ij}\in k^{*}$ for $1\leq i<j\leq n$ with $q_{ji}=q_{ij}^{-1}$ for $i<j$ and $q_{ii}=1$. Let
\begin{equation}\label{Ech3}
S=k\langle x_1,\cdots,x_{t},\cdots,x_n\mid x_ix_j = q_{ij}x_jx_i \text{ for all } i < j\text{ and } x_i^{N_i} = 0 \text{ for } 1\leq i\leq t \rangle.
\end{equation}
{\bf Note:} If $t=0$, in the special case coming from small quantum groups, Ginzburg and Kumar [12] show that cohomology of $S$ is a quantum exterior algebra so is finitely generated. The same should follow for more general $S$, as a direct calculation using explicit resolution.\\\\
\indent We will compute H$^{*}(S,k)=$ Ext$_{S}^{*}(k,k)$ where $k$ is an $S$-module on which $x$ acts as multiplication by zero. Bergh and Oppermann [3, Lemma 3.6] tells us that twisted tensor product of two resolutions is again a resolution. We sketch the proof for completeness as well as the construction is needed for later sections. To obtain information at the chain level, we need an explicit free $S$-resolution of $k$. This resolution is originally adapted from [2] and it is a twisted tensor product of the periodic resolutions
$$
\cdots\stackrel{x_i^{N_i-1}\cdot}\longrightarrow k[x_i]/(x_i^{N_i})\stackrel{x_i\cdot}\longrightarrow k[x_i]/(x_i^{N_i}) \stackrel{x_i^{N_i-1}\cdot}\longrightarrow k[x_i]/(x_i^{N_i})\stackrel{x_i\cdot}\longrightarrow k[x_i]/(x_i^{N_i})\stackrel{\varepsilon}\rightarrow k\rightarrow 0,$$
one for each $i,\ 1\leq i\leq t$ and
$$0\rightarrow k[x_i] \stackrel{x_i\cdot}\longrightarrow k[x_i]\stackrel{\varepsilon}\rightarrow k\rightarrow 0,$$
one for each $i,\ t+1\leq i\leq n$. 
 
Explicitly, we define a complex $K_\bullet$ of free $S$-modules as follows. For each $n$-tuple $(a_1,\cdots,a_n)$ of non-negative integers with $a_i = 0 \text{ or } 1$\, for each $i$,\ $t+1\leq i\leq n$, let $\Phi(a_1,\cdots,a_n)$ be a free generator in degree $a_1+\cdots+a_n$. Thus $$K_m = \oplus_{a_1+\cdots+a_n=m}S\Phi(a_1,\cdots,a_n).$$
{\bf Note:} Throughout this section we will interpret $\Phi(a_1,\cdots,a_i-1,\cdots,a_n)=0$ if $a_i-1$ is negative. Similarly, $\Phi(a_1,\cdots,a_i-2,\cdots,a_n)$ and $\Phi(a_1,\cdots,a_i-3,\cdots,a_n)$ will be zero if $a_i-2$ and $a_i-3$ are negative respectively.\\\\
For each $i,\ 1\leq i \leq t,$ let $\sigma_i, \tau_i : \mathbb{N}\rightarrow \mathbb{N}$ be the functions defined by
$$\sigma_i(a) =
\begin{cases}
1, \text{ if } a \text{ is odd}\\ 
N_i-1, \text{ if } a \text{ is even},
\end{cases}
$$
and $\tau_i(a) = \sum_{j=1}^a \sigma_i(j) \text{ for } a\geq 1, \tau_i(0) = 0.$ For each $ i,\ t+1\leq i\leq n$ we define $\sigma_i(a) = 1$ and $\tau_i(a) = a.$\\\\
We define the differential as follows:
$$d_i(\Phi(a_1,\cdots,a_n))=
\begin{cases}
\prod_{i<l} (-1)^{a_l} q_{li}^{\sigma_i(a_i)\tau_l(a_l)}x_i^{\sigma_i(a_i)} \Phi(a_1,\cdots,a_i-1,\cdots,a_n),\ \text{ if } a_i>0\\
\hspace{1.55in}0,\ \hspace{2in} \text{ if } a_i=0
\end{cases}
$$
Extend each $d_i$ to an $S$-module homomorphism. We will now verify that $K_{\bullet}$ is a complex. Let $d=d_1+\cdots+d_n$. Note that $x_i^{N_i} =0$ when $i\leq t$ and $\sigma_i(a_i)+\sigma_i(a_i-1)=N_i$. Consider,
\begin{eqnarray*}
d_id_i(\Phi(a_1,\cdots,a_n))\\
&=&d_i\left(\left(\prod_{i<l} (-1)^{a_l} q_{li}^{\sigma_i(a_i)\tau_l(a_l)}\right)x_i^{\sigma_i(a_i)} \Phi(a_1,\cdots,a_i-1,\cdots,a_n)\right)\\
&=&\left(\prod_{i<l} (-1)^{a_l} q_{li}^{\sigma_i(a_i)\tau_l(a_l)}\right)\left(\prod_{i<m} (-1)^{a_m} q_{mi}^{\sigma_i(a_i-1)\tau_m(a_m)}\right)\\
&&\hspace{1in}\cdot x_i^{\sigma_i(a_i)}x_i^{\sigma_i(a_i-1)} \Phi(a_1,\cdots,a_i-2,\cdots,a_n)\\
&=&0
\end{eqnarray*}
Since for $i>t,\ a_i-2$ is negative and in fact we get $0$ by definition of $d_i$. If $i\leq t$, it is because $x_i^{N_i}=0$.\\
If $i<j$, we have
\begin{eqnarray*}
d_id_j(\Phi(a_1,\cdots,a_n))\\
&=&d_i\left(\left(\prod_{j<l} (-1)^{a_l} q_{lj}^{\sigma_j(a_j)\tau_l(a_l)}\right)x_j^{\sigma_j(a_j)} \Phi(a_1,\cdots,a_j-1,\cdots,a_n)\right)\\
&=&\left(\prod_{j<l} (-1)^{a_l} q_{lj}^{\sigma_j(a_j)\tau_l(a_l)}\right)\left(\prod_{i<m} (-1)^{a_m} q_{mi}^{\sigma_i(a_i)\tau_m(a_m)}\right)\\
&&\hspace{.5in}\cdot x_j^{\sigma_j(a_j)}x_i^{\sigma_i(a_i)}\Phi(a_1,\cdots,a_i-1,\cdots,a_j-1,\cdots,a_n).
\end{eqnarray*}
\begin{eqnarray*}
d_jd_i(\Phi(a_1,\cdots,a_n))\\
&=&d_j\left(\left(\prod_{i<m} (-1)^{a_m} q_{mi}^{\sigma_i(a_i)\tau_m(a_m)}\right)x_i^{\sigma_i(a_i)} \Phi(a_1,\cdots,a_i-1,\cdots,a_n)\right)\\
&=&\left(\prod_{i<m} (-1)^{a_m} q_{mi}^{\sigma_i(a_i)\tau_m(a_m)}\right)\left(\prod_{j<l} (-1)^{a_l} q_{lj}^{\sigma_j(a_j)\tau_l(a_l)}\right)\\
&&\hspace{.5in}\cdot x_i^{\sigma_i(a_i)}x_j^{\sigma_j(a_j)}\Phi(a_1,\cdots,a_i-1,\cdots,a_j-1,\cdots,a_n).
\end{eqnarray*}
Comparison shows that a scalar factor for the term in which $m=j$ changes from $(-1)^{a_j} q_{ji}^{\sigma_i(a_i)\tau_j(a_j)}$ to $(-1)^{a_j-1} q_{ji}^{\sigma_i(a_i)\tau_j(a_j-1)}$, and $x_j^{\sigma_j(a_j)}x_i^{\sigma_i(a_i)}$ is replaced by $x_j^{\sigma_j(a_j)}x_i^{\sigma_i(a_i)}=q_{ji}^{\sigma_i(a_i)\sigma_j(a_j)}x_i^{\sigma_i(a_i)}x_j^{\sigma_j(a_j)}$. Since $\tau_i(a_i)=\tau_i(a_i-1)+\sigma_i(a_i)$, this illustrates that
$$d_id_j+d_jd_i = 0.$$
Since $d^2 = 0$, we can say that $K_\bullet$ is indeed a complex.\\
\indent Next we give a contracting homotopy to show that $K_\bullet$ is a resolution of $k$:\\
Let $\eta \in S$, and fix $l, 1\leq l\leq n$. Write 
$$\eta = 
\begin{cases}
\sum_{j=0}^{N_i-1} \eta_j x_l^{j}, \text{ if } 1\leq l\leq t \\\\
\sum_j \eta_j x_l^{j}, \text{ if } t+1\leq l\leq n
\end{cases}
$$
where $\eta_j$ is in the subalgebra of $S$ generated by the $x_i$ with $i \neq l$. Define\\
$s_l(\eta\Phi(a_1,\cdots,a_n))$
$$=
\begin{cases}
\displaystyle\sum_{j=0}^{N_i-1}s_l(\eta_jx_l^{j}\Phi(a_1,\cdots,a_n)), \text{ if } 1\leq l\leq t \\\\
\displaystyle\sum_j s_l(\eta_jx_l^{j}\Phi(a_1,\cdots,a_n)), \text{ if } t+1\leq l\leq n
\end{cases}
$$
where \\
$s_l(\eta_jx_l^{j}\Phi(a_1,\cdots,a_n))$
$$=
\begin{cases}
\delta_{j>0}(\prod_{l<m} (-1)^{a_m}q_{ml}^{-\sigma_l(a_l+1)\tau_m(a_m)})\eta_jx_l^{j-1}\Phi(a_1,\cdots,a_l+1,\cdots,a_n),\\
\hspace{2.5in} \text{ if } a_l \text{ is even with } 1\leq l\leq t\\
\delta_{j,N_l-1}(\prod_{l<m} (-1)^{a_m}q_{ml}^{-\sigma_l(a_l+1)\tau_m(a_m)})\eta_j\Phi(a_1,\cdots,a_l+1,\cdots,a_n),\\
\hspace{2.5in} \text{ if } a_l \text{ is odd with } 1\leq l\leq t\\
\omega \eta_jx_l^{j-1}\Phi(a_1,\cdots,a_{l}+1,\cdots,a_n),\ \text{ if } t+1\leq l\leq n
\end{cases}
$$\\\\
where \hspace{.1cm}$\delta_{j>0} =
\begin{cases}
1, \text{ if } j>0\\ 
0, \text{ if } j=0
\end{cases}
$\hspace{.1cm} and \hspace{.1cm} $\omega = \dfrac{1}{\prod_{l<u}(-1)^{a_u}q_{ul}^{a_u}}.$\\\\
\indent With the help of calculations we see that for all $i,\ 1\leq i\leq n$, 
$$(s_id_i+d_is_i)(\eta_jx_i^{j}\Phi(a_1,\cdots,a_n)) =
\begin{cases}
\eta_jx_i^{j}\Phi(a_1,\cdots,a_n), \text{ if } j>0 \text{ or } a_i>0\\
\hspace{1.5cm}0,\hspace{1.42cm} \text{ if } j=0 \text{ and } a_i =0
\end{cases}
$$\\
The way we have defined our $s_l$ and $d_i$, we get $s_ld_i+d_is_l = 0$ for all $i,l$ when $i\neq l$. For each $x_1^{j_1}\cdots x_n^{j_n}\Phi(a_1,\cdots,a_n)$, let $C = c_{j_1,\cdots,j_n,a_1,\cdots,a_n}$ be the cardinality of the set of all $i (1\leq i\leq n)$ such that both $j_i$ and $a_i$ are 0. Define\\
$$s(x_1^{j_1}\cdots x_n^{j_n}\Phi(a_1,\cdots,a_n)) = 
\begin{cases}
\dfrac{1}{n -C}(s_1+\cdots+s_n)(x_1^{j_1}\cdots x_n^{j_n}\Phi(a_1,\cdots,a_n))\\\\
0,\ \text{ if } n=C
\end{cases}
$$\\
and since $d=d_1+\cdots+d_n$, we have $sd+ds = id$ on each $K_m, m>0$. That is, $K_\bullet$ is exact in positive degrees. For exactness at $K_0 = S$ we look at the kernel of the augmentation (counit) map $\varepsilon : S\rightarrow k$ and the image of $d(x_i^{j_i-1}\cdots x_n^{j_n}\Phi(0,\cdots,1,\cdots,0))$. Observe that the kernel of $\varepsilon$ is spanned over the field $k$ by the elements $x_1^{j_1}\cdots x_t^{j_t}x_{t+1}^{j_{t+1}}\cdots x_n^{j_n} 
\cdot\Phi(0,\cdots,0),\ 0\leq j_i\leq N_i$, for $1\leq i\leq t$ and $j_i\in {\mathbb{N}}$ for $t +1\leq i\leq n$, with at least one $j_i\neq 0$.  Assume that $x_1^{j_1}\cdots x_t^{j_t}x_{t+1}^{j_{t+1}}\cdots x_n^{j_n}\Phi(0,\cdots,0)$ is such an element, and assume that $i$ is the smallest positive integer such that $j_i\neq 0$. Then $$d(x_i^{j_i-1}\cdots x_n^{j_n}\Phi(0,\cdots,1,\cdots,0))=\vartheta x_i^{j_i}\cdots x_t^{j_t}x_{t+1}^{j_{t+1}}\cdots x_n^{j_n}\Phi(0,\cdots,0)$$ where $\vartheta$ is a nonzero scalar. Thus ker$(\varepsilon)$ = im$(d),$ and $K_\bullet$ is a free resolution of $k$ as an $S$-module.\\
\indent Next to compute Ext$_S^{*}(k,k)$ we apply Hom$_S(-,k)$ to $K_\bullet$. The Hom$_S(-,k)$ functor induces the differential $d^{*}$. Moreover it is the zero map since $x_i$'s act as 0 on $k$. Thus the cohomology is the complex Hom$_S(K_\bullet,k)$. Now let  $\eta_i\in$ Hom$_S(K_1,k)$ be the function dual to $\Phi(0,\cdots,0,1,0\cdots,0)$ (the 1 in the $i$th position) and $\xi_i\in$ Hom$_S(K_2,k)$ for $i\leq t$ be the function dual to $\Phi(0,\cdots,0,2,0\cdots,0)$ (the 2 in the $i$th position). By abusing the notation we will identify the functions $\xi_i, \eta_i$ with the corresponding elements in H$^2(S,k)$ and H$^1(S,k)$, respectively. Further, we will show that they generate H$^{*}(S,k)$ and determine the relations among them. To do this we denote by $\xi_i$ and $\eta_i$ the corresponding chain maps $\xi_i: K_{\bullet}\rightarrow K_{\bullet -2}$ and $\eta_i: K_{\bullet}\rightarrow K_{\bullet -1}$ defined by
\begin{align*}
\xi_i(\Phi(a_1,\cdots, a_n))&=\displaystyle\prod_{l<i}q_{il}^{N_i\tau_l(a_l)}\Phi(a_1,\cdots, a_i-2,\cdots, a_n),\ \text{ if } 1\leq i\leq t \\\\
\eta_i(\Phi(a_1,\cdots, a_n))&=\prod_{i<l}q_{li}^{(\sigma_i(a_i)-1)\tau_l(a_l)}\prod_{l<i}(-1)^{a_l}q_{il}^{\tau_l(a_l)}x_i^{\sigma_i(a_i)-1}\Phi(a_1,\cdots, a_i-1,\cdots, a_n)\\
\end{align*}
\begin{theorem}\label{Tch3}
Let $S$ be the $k$-algebra generated by $x_1,\cdots,x_n$, subject to relations\\ $x_ix_j = q_{ij}x_jx_i\ \text{ for all }\ i < j,\ x_i^{N_i}= 0\ \text{ for }\ 1\leq i \leq t.$ Then {\em H}$^{*}(S,k)$ is generated by $\xi_i\ (i=1,\cdots,t)$ and $\eta_i\ (i=1,\cdots,n)$ where deg $\xi_i=2$ and deg $\eta_i=1$, subject to the relations
$$\xi_i\xi_j=q_{ji}^{N_iN_j}\xi_j\xi_i,\ \eta_i\xi_j=q_{ji}^{N_j}\xi_j\eta_i,\ \text{ and }\ \eta_i\eta_j=-q_{ji}\eta_j\eta_i.$$
\end{theorem}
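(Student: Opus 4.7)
Since each $x_i$ acts as zero on the trivial module $k$, the differentials in the complex $\mathrm{Hom}_S(K_\bullet,k)$ all vanish, so $H^m(S,k)$ has a $k$-basis dual to $\{\Phi(a_1,\ldots,a_n):\sum a_i=m,\ a_i\in\mathbb{N}\ \text{for}\ i\leq t,\ a_i\in\{0,1\}\ \text{for}\ i>t\}$. The resulting Hilbert series is $(1+z)^{n-t}/(1-z)^t$. On the other hand, modulo the three stated relations every monomial in the $\xi_i$'s and $\eta_i$'s reduces to ordered form $\xi_1^{b_1}\cdots\xi_t^{b_t}\eta_{i_1}\cdots\eta_{i_s}$ with $i_1<\cdots<i_s$; here $\eta_i^2=0$ follows from $q_{ii}=1$ giving $\eta_i^2=-\eta_i^2$ in characteristic zero. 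This yields Hilbert series $(1+z)^n/(1-z^2)^t=(1+z)^{n-t}/(1-z)^t$ for the presented algebra, matching the one above. Hence it suffices to (i) verify the chain-map property of $\xi_i,\eta_i$; (ii) establish the three relations at chain level; and (iii) prove the ordered monomials are linearly independent in $H^{*}(S,k)$.

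For step (i), direct substitution shows $d\xi_j=\xi_jd$ and $d\eta_j=\eta_jd$ on each $\Phi(a_1,\ldots,a_n)$. The only identities needed are $\tau_l(a_l)=\tau_l(a_l-1)+\sigma_l(a_l)$ and, for $i\leq t$, $\sigma_i(a_i-2)=\sigma_i(a_i)$, together with $x_i^{N_i}=0$, the last absorbing the $x_i^{N_i-1}$ factor produced by $\eta_i$ in the even-parity case. For step (ii), compute each of the compositions $\xi_i\xi_j$, $\xi_j\xi_i$, $\eta_i\xi_j$, $\xi_j\eta_i$, $\eta_i\eta_j$, $\eta_j\eta_i$ on $\Phi(a_1,\ldots,a_n)$. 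The governing observation is that pre-shifting coordinate $j$ by $-2$ changes the exponent $\tau_j$ appearing in its partner's formula by $-N_j$, and pre-shifting by $-1$ changes it by $-\sigma_j(a_j)$; cross-index $\tau_l$ with $l\neq j$ are unaffected. Collecting these discrepancies, applying $q_{ji}=q_{ij}^{-1}$, and accounting for the sign $(-1)^{a_l}$ built into $\eta_j$, produces exactly the stated factors $q_{ji}^{N_iN_j}$, $q_{ji}^{N_j}$, and $-q_{ji}$.

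For step (iii), evaluate the chain-map composition $\xi_1^{b_1}\circ\cdots\circ\xi_t^{b_t}\circ\eta_{i_1}\circ\cdots\circ\eta_{i_s}$ on the ``diagonal'' generator $\Phi(2b_1+c_1,\ldots,2b_t+c_t,c_{t+1},\ldots,c_n)$, where $c_i=1$ iff $i\in\{i_1,\ldots,i_s\}$. The $\eta$-factors act first (reading right to left), on coordinates where $a_i$ is odd, so that $\sigma_i(a_i)=1$ and no $x_i$ is introduced; afterwards the $\xi_i^{b_i}$ reduce each remaining even $a_i=2b_i$ to $0$ without producing any $x_i$-factor. The accumulated scalar is a product of units $q_{ij}^{\star}\in k^{*}$ and signs, hence nonzero. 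Thus this composition, followed by the augmentation, pairs nontrivially with the dual basis vector indexed by $(2b_1+c_1,\ldots,c_n)$. Distinct ordered monomials pair with distinct dual basis vectors, proving their linear independence, and combined with the Hilbert-series match this completes the identification of the presented algebra with $H^{*}(S,k)$.

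The main obstacle is the $q$- and sign-bookkeeping in step (ii): one must simultaneously track the parities governing $\sigma_j,\tau_j$ and the Koszul signs to verify that everything except the expected discrepancy cancels when swapping the order of composition. A secondary subtlety is the uniform treatment of the ``non-nilpotent'' indices $i>t$, where $\sigma_i\equiv 1$ and $a_i$ is restricted to $\{0,1\}$, so that several of the general formulas degenerate and must be checked to still yield the stated relations, in particular for $\eta_i\eta_j=-q_{ji}\eta_j\eta_i$ with one or both indices above $t$.
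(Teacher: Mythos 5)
Your approach mirrors the paper's essentially step for step: both dualize the explicit resolution $K_\bullet$ and note the induced differential on $\mathrm{Hom}_S(K_\bullet,k)$ vanishes, both verify the relations by composing the chain maps $\xi_i,\eta_i$ using $\tau_l(a_l)=\tau_l(a_l-1)+\sigma_l(a_l)$, both pair the ordered monomial $\xi_1^{b_1}\cdots\xi_t^{b_t}\eta_1^{c_1}\cdots\eta_n^{c_n}$ nontrivially with the diagonal generator $\Phi(2b_1+c_1,\ldots,2b_t+c_t,c_{t+1},\ldots,c_n)$ and trivially with all other free generators, and both conclude by a degreewise dimension count. Your Hilbert-series identity $(1+z)^n/(1-z^2)^t=(1+z)^{n-t}/(1-z)^t$ is a cleaner, more explicit packaging of the paper's phrase ``in each degree there are the same number of elements of the form $\xi_1^{b_1}\cdots\eta_n^{c_n}$ as there are free generators $\Phi(a_1,\ldots,a_n)$,'' but it is not a genuinely different route.

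One caveat: you deduce $\eta_i^2=0$ from the $i=j$ instance of $\eta_i\eta_j=-q_{ji}\eta_j\eta_i$. The paper instead computes $\eta_i^2$ directly and records that it is a \emph{nonzero} scalar multiple of $\xi_i$ when $N_i=2$: indeed $\eta_i(\Phi(\ldots 2_i\ldots))=x_i^{N_i-2}\Phi(\ldots 1_i\ldots)$, so $\eta_i^2(\Phi(\ldots 2_i\ldots))=x_i^{N_i-2}$, which has augmentation $1$ exactly when $N_i=2$. Thus your step (ii) --- establishing the third relation at chain level including $i=j$ --- fails for those indices, and the presentation you describe is not literally isomorphic to $\mathrm{H}^*(S,k)$ in that edge case (although the additive Hilbert series and final dimension count are unaffected, since $\xi_i$ then simply becomes redundant as a generator). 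You should either assume $N_i>2$, or compute $\eta_i^2$ directly as the paper does and note the special form of the $N_i=2$ case.
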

\begin{proof}
The ring structure of the subalgebra of H$^{*}(S,k)$ generated by $\xi_i, \eta_i$ is given by composition of these chain maps.\\
\indent A calculation shows that the relations hold and if $N_i=2$, then $\eta_i^2$ is a nonzero scalar multiple of $\xi_i$ and the corresponding element in cohomology is zero if $N_i\neq 2$. Thus any element in the algebra generated by the $\xi_i$ and $\eta_i$ may be written as a linear combination of elements of the form 
$\xi_1^{b_1}\cdots\xi_{t}^{b_t}\eta_1^{c_1}\cdots\eta_{t}^{c_t}\cdots\eta_n^{c_n}$ with $b_i\geq 0$ and $c_i\in \{0,1\}$.\\ 
\indent We claim that the set of all $\xi_1^{b_1}\cdots\xi_{t}^{b_t}\eta_1^{c_1}\cdots\eta_{t}^{c_t}\cdots\eta_n^{c_n}$ forms a $k$-basis for H$^{*}(S,k)$.\\
First, calculation shows that
\begin{align*}
\xi_1^{b_1}\cdots\xi_{t}^{b_t}\eta_1^{c_1}\cdots\eta_{t}^{c_t}\cdots\eta_n^{c_n}(\Phi(2b_1+c_1,\cdots,2b_{t}+c_{t},c_{t+1},\cdots,c_n))\\
&\hspace{-3in}=\nu\Phi(0,\cdots,0)
\end{align*}
where $\nu$ is some nonzero scalar and
\begin{align*}
\xi_1^{b_1}\cdots\xi_{t}^{b_t}\eta_1^{c_1}\cdots\eta_{t}^{c_t}\cdots\eta_n^{c_n}(\Phi(e_1,\cdots,e_{t},e_{t+1},\cdots,e_n))=0
\end{align*}
where $e_i\neq 2b_i+c_i$ for some $i$. That is, $\xi_1^{b_1}\cdots\xi_{t}^{b_t}\eta_1^{c_1}\cdots\eta_{t}^{c_t}\cdots\eta_n^{c_n}$ takes all other $S$-basis elements of $K_{\sum (2b_i+c_i)}$ to 0. Therefore, all such monomials form a linearly independent set.\\
\indent Clearly in each degree, there are the same number of elements of the form $\xi_1^{b_1}\cdots\xi_{t}^{b_t}\eta_1^{c_1}\cdots\eta_{t}^{c_t}\cdots\eta_n^{c_n}$ as there are free generators $\Phi(a_1,\cdots,a_n)$. Therefore, the $\xi_1^{b_1}\cdots\xi_{t}^{b_t}\eta_1^{c_1}\cdots\eta_{t}^{c_t}\cdots\eta_n^{c_n}$ must form a dual basis to the $\Phi(a_1,\cdots,a_n)$.\\
\indent Hence, we get that H$^{*}(S,k)\cong$ Hom$_S(K_\bullet,k) \cong$ Hom$_k(V,k)$ where $V$ has basis all $\Phi(a_1,\cdots,a_n)$. This shows that the set of monomials of the form $\xi_1^{b_1}\cdots\xi_{t}^{b_t}\cdots\xi_n^{b_n}\eta_1^{c_1}\cdots\eta_{t}^{c_t}\cdots\eta_n^{c_n}$ forms a $k$-basis for H$^{*}(S,k)$.\\
\end{proof}
\end{section}
\begin{section}{Some Cocycles on The Algebra}
For this section we will use the same terminology as used by Mastnak and Witherspoon in Section 6 of [17] with some additional information.\\\\
\indent Let $B$ be a PBW algebra over $k$ as defined in Section 2 and $A=B/(x_1^{N_1},\cdots,x_t^{N_t})$. As a vector space $B$ has a basis $\{x_1^{i_1}x_2^{i_2}\cdots x_n^{i_n}\mid i_1,\cdots,i_n\in\mathbb{N}\}.$\\
\indent We want to show that the above set is indeed a basis for $A$ with some restriction on $i_j, 1\leq j\leq t$. To prove its a basis we need the assumption that $x_i^{N_i}$ is in the braided center of $B$ for all $i,\ 1\leq i\leq t$.\\
\indent Let $b\in B$. Then $b= \sum_I a_Ix_1^{i_1}x_2^{i_2}\cdots x_n^{i_n}$ is a finite sum where $I=(i_1,i_2,\cdots,i_n)$ and $a_I$ is a scalar. Therefore, 
\begin{align*}
b+(x_1^{N_1},\cdots,x_t^{N_t})&= \sum_Ia_Ix_1^{i_1}x_2^{i_2}\cdots x_n^{i_n}+ (x_1^{N_1},\cdots,x_t^{N_t})\\
&=\sum_{\substack{I\\ 0\leq i_j< N_j\\ 1\leq j\leq t}}(a_Ix_1^{i_1}x_2^{i_2}\cdots x_n^{i_n}+ (x_1^{N_1},\cdots,x_t^{N_t}))
\end{align*}\\
This proves that $\{x_1^{i_1}x_2^{i_2}\cdots x_n^{i_n}\mid 0\leq i_1 <N_1,\cdots,0\leq i_t <N_t,\hspace{0.2cm} i_{t+1},\cdots,i_n\in~\mathbb{N}\}$ is a spanning set for $A$.\\\\
\indent Define, $$[x_1^{i_1}\cdots x_n^{i_n},x_1^{j_1}\cdots x_n^{j_n}]_c = x_1^{i_1}\cdots x_n^{i_n}x_1^{j_1}\cdots x_n^{j_n}- (\prod_{k<l}q_{lk}^{-(j_li_k-j_ki_l)})x_1^{j_1}\cdots x_n^{j_n}x_1^{i_1}\cdots x_n^{i_n}.$$
\begin{defn}
{\em An element of the form $x_1^{i_1}\cdots x_n^{i_n}$ is said to be in the braided center of $B$, if}
\begin{equation}
[x_1^{i_1}\cdots x_n^{i_n},x_1^{j_1}\cdots x_n^{j_n}]_c = 0,\hspace{0.2cm} \text{\em{for all} } x_1^{j_1}\cdots x_n^{j_n}\in~B.\\
\end{equation}
\end{defn}
\indent Assume that $x_i^{N_i}$ is in the braided center of $B$ for all $i,\ 1\leq i\leq t$. This assumption will be also needed for a later part of this section.\\
\indent To show that the set is linearly independent we need to prove that $\displaystyle\sum_Ia_Ix_1^{i_1}x_2^{i_2}\cdots x_n^{i_n}$ belonging to $(x_1^{N_1},\cdots,x_t^{N_t})$ implies all $a_I = 0$.\\
\indent Consider, $$\displaystyle\sum_Ia_Ix_1^{i_1}x_2^{i_2}\cdots x_n^{i_n} =\displaystyle \sum_{J,i} T_Jx_i^{N_i}W_J$$ where $T_J, W_J\in B$. Since $x_i^{N_i}$ is in the braided center we have
$$\displaystyle\sum_Ia_Ix_1^{i_1}x_2^{i_2}\cdots x_n^{i_n} =\displaystyle \sum_{J,i}x_i^{N_i}U_J$$ where $U_J\in B$. Observe that in each expression on the right hand side there is atleast one $i$ for which the  power of $x_i$ is atleast $N_i$. Thus by comparing the coefficients we get $a_I = 0$.\\
\indent Hence, $\{x_1^{i_1}x_2^{i_2}\cdots x_n^{i_n}\mid 0\leq i_1 <N_1,\cdots,0\leq i_t <N_t,\hspace{0.2cm} i_{t+1},\cdots,i_n\in\mathbb{N}\}$ is a basis for $A$.\\\\
\indent Next we want to define 2-cocycles $\zeta_i$ on $A$. These 2-cocycles represent the elements of H$^{2}(A,k)$. We make use of the reduced bar resolution of $k$,
$$\cdots\longrightarrow  B\otimes (B^{+})^{\otimes 2}\stackrel{\delta_2}\longrightarrow B\otimes B^{+}\stackrel{\delta_1}\longrightarrow B\stackrel{\varepsilon}\longrightarrow k\longrightarrow 0.$$
where $B$ is an augmented algebra with augmention map $\varepsilon : B\rightarrow k$, $B^{+} =$ Ker $\varepsilon$ is the augmentation ideal and $\delta_i(b_0\otimes b_1\otimes\cdots\otimes b_i) = \sum_{j=0}^{i-1}(-1)^jb_0\otimes\cdots\otimes b_jb_{j+1}\otimes\cdots\otimes b_i$. For each $i, 1\leq i\leq t$ define $\tilde{\zeta_i} : B^{+}\otimes B^{+}\rightarrow k$ by 
$$\tilde{\zeta_i}(r\otimes s) = \gamma_{(0,\cdots,0,N_i,0,\cdots,0)}$$ 
where $N_i$ is in the $i^{th}$ position and $rs = \sum_a\gamma_ax^a \in B.$ We need to check that $\tilde{\zeta_i}(r\otimes s)$ is associative that is to show that $\tilde{\zeta_i}(rr_1\otimes s) = \tilde{\zeta_i}(r\otimes~r_1s) \text{ for all }\hspace{0.1cm} r, r_1, s\in B^{+}.$
But this is true by definition and thus $\tilde\zeta_i$ may be trivially extended to a 2-cocycle on $B$. Let us see how it is done. We will denote the 2-cocycle on $B$ by $\tilde\xi_i$ and define as  $\tilde\zeta_i(b_1\otimes b_2) = \tilde\zeta_i\mid_{B^{+}\otimes B^{+}}(b_1\otimes b_2)$ for $b_1, b_2\in B^{+}.$ Indeed $\tilde\zeta_i$ is a coboundary on $B$ that is $\tilde\zeta_i = -\delta^{*} h_i$ where $h_i(r)$ is the coefficient of $x_i^{N_i}$ in $r\in B^+$ written as a linear combination of PBW basis elements. To see this note that $h_i : B\otimes B^{+}\rightarrow k$ is a 1-cochain, Hom$_B(B\otimes B^+,k)\cong$ Hom$_k(B^+,k)$ and $\delta^{*}h_i\in$ Hom$_B(B\otimes B^{+}\otimes B^{+},k)$.\\
\indent To define a 2-cocycle $\zeta_i$ on $A$ we next show that $\tilde\zeta_i$ factors through the quotient map $\pi : B\rightarrow A$ and that $\zeta_i$ is not a coboundary on $A$. We must show that $\tilde\zeta_i(r,s) = 0$ whenever either $r$ or $s \in$ Ker $\pi$. Consider the following diagram
 $$\xymatrix{B^{+}\otimes B^{+}\ar[d]_{\pi\otimes\pi} \ar[r]^{\mspace{45mu}\tilde{\zeta_i}} &k \\
                               A\otimes A\ar@{.>}[ur]_{\zeta_i}}$$
Suppose $\ x^a\in$ Ker $\pi\ $ then $a_j\geq N_j$ for some $j$ with $1\leq j\leq t.$ 
As per the assumption that $x_i^{N_i}$ is in the braided center we can write $x^a = \vartheta x_j^{N_j} x^b$ where $\vartheta$ is a non-zero scalar and $b$ is arbitrary. Therefore, $\tilde\zeta_i(x^a\otimes x^c)= \vartheta \tilde\zeta_i(x_j^{N_j}x^b\otimes x^c)$ and this is the coefficient of $x_i^{N_i}$ in the product $\vartheta x_j^{N_j}x^bx^c.$ If $j=i$, then since $x^c=x_1^{c_1}x_2^{c_2}\cdots x_n^{c_n} \in B^{+}$ the above product cannot have non-zero coefficient for $x_i^{N_i}$. The same is true, if $j\neq i$ since $x_j^{N_j}$ is a factor of $x^ax^c$. If $x^c \in$ Ker~$\pi$ a similar argument will work.\\
\indent Thus, we have $\tilde\zeta_i(x^a\otimes x^c) = 0$ that is, $\tilde\zeta_i$ factors through the quotient map $\pi : B\rightarrow A.$\\
Therefore, we may define $\zeta_i : A^{+}\otimes A^{+}\rightarrow k$ by
$$ \zeta_i(r\otimes s) = \tilde\zeta_i(\tilde r\otimes \tilde s)$$
where $\tilde r, \tilde s$ are defined via a section of $\pi$. (Choose the section $\phi$ of the quotient map $\pi : B\rightarrow A$ such that $\phi(r) = \tilde r$ where $\tilde r$ is the unique element that is a linear combination of the PBW basis elements of $B$ with $i_l< N_l \hspace{0.2cm} \text{ for all } l=1,\cdots,n$).\\
\indent This is well defined since $\tilde\zeta_i$ is well defined. We still need to verify that $\zeta_i$ is associative on $A^{+}$. Let $r, s, u\in A^{+}$ and since $\pi$ is algebra homomorphism $\tilde{r} \tilde {s} = \widetilde{rs} + y$ and $\tilde {s} \tilde{ u} = \widetilde{su} + z$ for some $y, z \in$ Ker $\pi.$ Observe that Ker $\pi\otimes B + B\otimes$ Ker $\pi \subset$ Ker~$\tilde\zeta_i.$\\
Therefore, we have
\begin{eqnarray*}
\zeta_i(rs\otimes u) &=& \tilde\zeta_i(\widetilde{rs}\otimes \tilde u)\\
&=&\tilde\zeta_i((\tilde {r} \tilde s-y)\otimes \tilde u)\\
&=&\tilde\zeta_i(\tilde {r} \tilde s\otimes \tilde u)\\ 
&=&\tilde\zeta_i(\tilde r\otimes \tilde {s} \tilde u)\hspace{.5cm} (\tilde\zeta_i \text{ associative })\\
&=&\tilde\zeta_i(\tilde r\otimes \widetilde{su})\\ 
&=&\zeta_i(r\otimes su)
\end{eqnarray*} 
This shows that $\zeta_i$ is associative on $A^{+}$. Hence, $\zeta_i$ is 2-cocycle on $A$.
\end{section}
\begin{section}{Finite Generation}
\indent In this section we prove our main theorem. We follow the same terminology as used in Section 5 of [16] with some additional information.\\\\
\indent Let $B$ be a PBW algebra as defined in Section 2 and $A=B/(x_1^{N_1},\cdots,x_t^{N_t})$. Recall the assumption from Section 4 that $x_i^{N_i}$ is in the braided center. Hence, a filtration on $B$ induces a filtration on $A$ [5, Theorem 4.6.5] for which $S = Gr A$, given by generators and relations of type (\ref{Ech3}). Thus H$^{*}(S,k)$ is given by Theorem \ref{Tch3}.\\
\indent Now our algebra $A$ is an augmented algebra over the field $k$, with augmentation $\varepsilon : A\rightarrow k$. Since $A$ is filtered it induces an increasing filtration $F_{0}P_{\bullet}\subset F_{1}P_{\bullet}\subset\cdots \subset F_{n}P_{\bullet}\subset\cdots$ on the reduced bar (free $A$) resolution of $k$,
$$P_\bullet : \cdots\stackrel{\partial_3}\rightarrow A\otimes (A^{+})^{\otimes 2} \stackrel{\partial_2}\rightarrow A\otimes A^{+} \stackrel{\partial_1}\rightarrow A \stackrel{\varepsilon}\rightarrow k\rightarrow 0$$
where $A^{+} =$ Ker $\varepsilon$, $\partial_{n}(a_0\otimes\cdots\otimes a_n) = \sum_{j=0}^{n-1} (-1)^j a_0\otimes\cdots\otimes a_ja_{j+1} \otimes\cdots\otimes a_n$ and the filtration is given in each degree $n$ by $$F_p(A\otimes (A^{+})^{\otimes n})=\displaystyle{\sum_{i_0+\cdots+i_n =p}} F_{i_0}A\otimes F_{i_1}(A^+)\otimes\cdots\otimes F_{i_n}(A^+).$$
Then the reduced bar complex of $Gr A$ is precisely $Gr P_\bullet$, where 
$$(Gr P_n)_p := F_{p}P_n/F_{p-1}P_n.$$ 
\indent Now let $\mathcal{C}^{\bullet}(A):=$ Hom$_{A}(P_{\bullet}, k)$. Note that $\mathcal{C}^{n}(A)=$ Hom$_{A}(P_{n}, k) =$ Hom$_{A}(A\otimes (A^{+})^{\otimes n}, k)$ is a filtered vector space where
$$F^{p}\mathcal{C}^{n}(A) = \{f: P_n\rightarrow k\mid f\mid_{F_{p-1}P_n} = 0\}$$
This filtration is compatible with the coboundary map on $\mathcal{C}^{\bullet}(A)$. Hence, $\mathcal{C}^{\bullet}(A)$ is a filtered cochain complex: $\mathcal{C} = F^{0}\mathcal{C}^{\bullet}\supset F^{1}\mathcal{C}^{\bullet}\supset\cdots .$ Now our algebra $A$ satisfies $F_{p}A=0$ if $p<0$, $1\in F_{0}A$ and $A=\bigcup_{p} F_{p}A$. Thus, there is a convergent May spectral sequence associated to the filtration of a cochain complex (see [18, Theorem 3] and [19, Theorem 12.5]):
\begin{equation}\label{E1ch5}
E_1^{p,q}= \text{H}^{p+q}((Gr A)_p ,k)\Longrightarrow \text{H}^{p+q}(A,k).
\end{equation}
{\bf Note:} For special cases refer to [22, Theorem 5.5.1].\\\\
From Section 4 we know that
\begin{equation}\label{E2ch5}
\zeta_i(x^a\otimes x^b) = \gamma_i
\end{equation}\\
where $\gamma_i$ is the coefficient of $x_i^{N_i}$ in the product $x^a x^b $, and $x^a,x^b$ range over all pairs of PBW basis elements. Recall that any PBW basis element is written as $x_1^{\alpha_1}\cdots x_n^{\alpha_n}$ and needs to be totally order. By [5, Theorem 4.6.5] we know that there exists a filtration and thus, there is a total ordering which we denote by $p_i$ (a positive integer). Now, observe that $\zeta_i$ are in degrees $(p_i,2-p_i)$.

We wanted to relate these functions $\zeta_i$ to the elements of the $E_1$ page of the spectral sequence (\ref{E1ch5}). We have $\zeta_i\mid_{F_{p_{i}-1}(A\otimes A)} = 0$ but $\zeta_i\mid_{F_{p_{i}}(A\otimes A)}\neq 0$ by (\ref{E2ch5}).\\
Thus, we conclude by the definition of $\zeta_i$ from Section 4 that $\zeta_i \in F^{p_i}\mathcal{C}^2$ but $\zeta_i \notin F^{p_i+1}\mathcal{C}^2$. The filtration on $\mathcal{C}^{\bullet}$ induces a filtration on H$^{*}(\mathcal{C}^{\bullet})$, that is to say $F^{p}$H$^{n}(\mathcal{C}^{\bullet}):=$ im$\{$H$^n(F^{p}\mathcal{C}^{\bullet})\rightarrow$ H$^n(\mathcal{C}^{\bullet})\}$ with $F^{0}$H$^n(\mathcal{C}^{\bullet})=\ $H$^n(\mathcal{C}^{\bullet})$. By denoting the corresponding cocycle in $F^{p_i}$H$^2(A,k)$ by the same letter we further conclude that $\zeta_i\in$ im$\{$H$^2(F^{p_i}\mathcal C^{\bullet})\rightarrow$ H$^2(\mathcal{C}^{\bullet})\} = F^{p_i}$H$^2(A,k)$, but $\zeta_i\notin$ im$\{$H$^2(F^{p_i+1}\mathcal{C}^{\bullet})\rightarrow$ H$^2(\mathcal{C}^{\bullet})\} = F^{p_i+1}$H$^2(A,k).$ Hence, we can identify $\zeta_i$ with corresponding nontrivial homogeneous element in the associated graded complex:
$$\tilde \zeta_i \in F^{p_i}\text{H}^2(A,k)/F^{p_i+1}\text{H}^2(A,k)\simeq E_{\infty}^{p_i,2-p_i}.$$
Refer to [18] for the isomorphism.\\
\indent Since $\zeta_i \in F^{p_i}\mathcal{C}^2$ but $\zeta_i \notin F^{p_i+1}\mathcal{C}^2$, it induces an element $\bar{\zeta_i}\in E_0^{p_i,2-p_i} = F^{p_i}\mathcal{C}^2/F^{p_i+1}\mathcal{C}^2$ which will be in the kernels of all the differentials of the spectral sequence since it is induced by an actual cocycle in $\mathcal{C}^{\bullet}$. Hence, the image of $\bar{\zeta_i}$ will be in the $E_{\infty}$-page. Now the non-zero element $\tilde \zeta_i$ is also induced by the same cocycle as $\bar{\zeta_i}$ in $\mathcal{C}^{\bullet}$. Hence we may identify these cocycles. This leads to the conclusion that $\tilde{\zeta_i}\in E_0^{p_i,2-p_i}$, and, correspondingly, its image in $E_1^{p_i,2-p_i}\hookrightarrow$ H$^2(Gr A,k)$ which we denote by the same symbol, is a permanent cycle.\\
\indent Note that via the formula (\ref{E2ch5}) we can obtain similar cocycles $\hat{\zeta_i}$ for $S = Gr A$. Comparing the values of $\bar{\zeta_i}$ and $\hat{\zeta_i}$ on basis elements $x^a\otimes x^b$ of $Gr A\otimes Gr A$ leads us to the conclusion that they are the same function. Hence $\hat{\zeta_i}\in E_1^{p_i,2-p_i}$ are permanent cycles.\\

We will identify these elements $\hat{\zeta_i}\in$ H$^2(Gr A,k)$ with the cohomology classes $\xi_i\in$ H$^{*}(S,k)$ of Theorem \ref{Tch3} via the following theorem.
\begin{theorem} For  each $i\ (1\leq i\leq n)$, the cohomology classes $\xi_i$ and $\hat{\zeta_i}$ coincide as elements of H$^2(Gr A,k)$.
\end{theorem}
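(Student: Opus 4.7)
The plan is to construct an explicit chain map $\Psi_\bullet : P_\bullet \to K_\bullet$ lifting the identity on $k$, where $P_\bullet$ is the reduced bar resolution of $k$ over $S = Gr A$ and $K_\bullet$ is the Koszul-type free resolution built in Section 3, and then show that $\Psi_2^{\ast}(\xi_i)$ agrees with $\hat{\zeta_i}$ as a $2$-cocycle on $P_\bullet$ up to a coboundary. Since both are $S$-projective resolutions of $k$, the comparison theorem guarantees that $\Psi_\bullet$ exists and is unique up to chain homotopy, so it suffices to verify the identification on representative cocycles; any residual discrepancy will automatically be a coboundary in $\mathrm{Hom}_S(P_\bullet, k)$.

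For the construction, set $\Psi_0(1) = \Phi(0,\ldots,0)$. In degree one, $\Psi_1(1 \otimes x^a)$ is defined inductively on the length of the PBW monomial $x^a$ so that $d \circ \Psi_1 = \Psi_0 \circ \partial_1$; an expression of the form $\sum_i c_i(a)\, x^{a - e_i'} \Phi(0,\ldots,1,\ldots,0)$ (with the $1$ in position $i$) works, where the scalars $c_i(a)$ are produced by iterated reordering using the braiding $x_i x_j = q_{ij} x_j x_i$. In degree two, $\Psi_2(1 \otimes x^a \otimes x^b)$ is then forced by $d \circ \Psi_2 = \Psi_1 \circ \partial_2$, and its expansion in the free $S$-basis of $K_2$ will carry a $\Phi(0,\ldots,2,\ldots,0)$-component (with the $2$ in position $i$) precisely when the product $x^a x^b$ contains $x_i^{N_i}$, since $x_i^{N_i}$ vanishes in $S$ while in $K_\bullet$ it is the boundary information tracked by $\Phi(0,\ldots,2,\ldots,0)$ via $d(\Phi(0,\ldots,2,\ldots,0)) = x_i^{N_i - 1}\Phi(0,\ldots,1,\ldots,0)$.

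The key computation is to evaluate $\xi_i \circ \Psi_2$ on an arbitrary bar element $1 \otimes x^a \otimes x^b$. By Theorem \ref{Tch3}, $\xi_i$ sends $\Phi(0,\ldots,2,\ldots,0)$ (with the $2$ in position $i$) to $\Phi(0,\ldots,0)$, i.e.\ to $1 \in k$ after augmentation, and kills every other free generator of $K_2$; hence $(\xi_i \circ \Psi_2)(1 \otimes x^a \otimes x^b)$ equals the scalar coefficient of $\Phi(0,\ldots,2,\ldots,0)$ in $\Psi_2(1 \otimes x^a \otimes x^b)$. By the previous paragraph this coefficient is exactly the coefficient of $x_i^{N_i}$ in $x^a x^b$, which is $\hat{\zeta_i}(x^a \otimes x^b)$ by formula (\ref{E2ch5}). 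A test evaluation on the pair $1 \otimes x_i \otimes x_i^{N_i - 1}$, where a direct calculation yields $\Psi_2(1 \otimes x_i \otimes x_i^{N_i - 1}) = \Phi(0,\ldots,2,\ldots,0)$ and $\hat{\zeta_i}(x_i \otimes x_i^{N_i - 1}) = 1$, pins down the normalization.

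The main obstacle will be the bookkeeping of braiding scalars $q_{ij}$ when constructing $\Psi_1$ and $\Psi_2$ and when tracking the $\Phi(0,\ldots,2,\ldots,0)$-component for a general pair of PBW monomials, because the relations $x_i^{N_i} = 0$ interact with the reordering scalars and one must verify that the assembled $q$-powers match those appearing in the chain-level formula for $\xi_i$, namely $\xi_i(\Phi(a_1,\ldots,a_n)) = \prod_{l < i} q_{il}^{N_i \tau_l(a_l)}\,\Phi(a_1,\ldots,a_i - 2,\ldots,a_n)$. Once this matching is verified, the equality $\xi_i \circ \Psi_2 = \hat{\zeta_i}$ holds on every basis element of $P_2$ (up to a coboundary), establishing $\xi_i = \hat{\zeta_i}$ in $H^2(Gr A, k)$.
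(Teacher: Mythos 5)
Your plan reverses the direction of the comparison map that the paper actually uses, and this choice has real consequences. The paper constructs a chain map $F_\bullet : K_\bullet \to S\otimes (S^+)^{\otimes\bullet}$ \emph{from} the Koszul-type resolution \emph{to} the reduced bar resolution, by specifying $F_1$ and $F_2$ on the handful of free generators $\Phi(\cdots 1_i\cdots)$, $\Phi(\cdots 2_i\cdots)$, $\Phi(\cdots 1_i\cdots 1_j\cdots)$; commutativity of the two relevant squares is then a small finite check. Pulling the bar-cochain $\hat\zeta_i$ back along $F_2$ gives a function on $K_2$, and one simply reads off that $F_2^{*}(\hat\zeta_i)(\Phi(\cdots 2_i\cdots))=1$ and that it kills the other generators, i.e.\ $F_2^{*}(\hat\zeta_i)$ \emph{is} the dual functional $\xi_i$. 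No braiding bookkeeping over arbitrary PBW monomials ever enters, because $K_2$ is a finitely generated free module with explicitly named generators.

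Your direction, constructing $\Psi_\bullet : P_\bullet \to K_\bullet$, is valid in principle (the two maps are quasi-inverse up to homotopy, so either computes the same identification in cohomology), but it is the hard direction: $\Psi_1$ and $\Psi_2$ must be defined on $1\otimes x^a$ and $1\otimes x^a\otimes x^b$ for \emph{all} pairs of PBW monomials, and the claim that the $\Phi(\cdots 2_i\cdots)$-coefficient of $\Psi_2(1\otimes x^a\otimes x^b)$, after applying the augmentation, exactly reproduces the $x_i^{N_i}$-coefficient of $x^ax^b$ is precisely the computation you label the ``main obstacle'' and do not carry out. As written the proposal verifies the identification only on the single test element $1\otimes x_i\otimes x_i^{N_i-1}$, which pins down a normalization but does not establish equality of cochains (or equality up to coboundary) in general. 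To close the gap you would either need to produce the explicit $q$-scalar formula for $\Psi_2$ and match it against $\xi_i(\Phi(a_1,\ldots,a_n))=\prod_{l<i}q_{il}^{N_i\tau_l(a_l)}\Phi(\ldots,a_i-2,\ldots)$, or, more economically, switch to the paper's direction where the entire verification collapses to evaluating $F_2^{*}(\hat\zeta_i)$ on three families of generators.
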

\begin{proof}
In Section 3 we have defined the chain complex $K_{\bullet}$ which is a projective resolution of the trivial $Gr A$-module $k$. Elements $\eta_i\in$ H$^{1}(Gr A,k)$ and $\xi_i\in$ H$^{2}(Gr A,k)$ were defined via the complex $K_{\bullet}$. Our aim is to identify $\xi_i$ with the elements of the chain complex $\mathcal{C}^{\bullet}$ defined above. For this we consider the following diagram and define the maps $F_1, F_2$ making it commutative, where $S=Gr A$:
$$\xymatrix{\cdots\ar[r] & K_2\ar[d]^{F_2} \ar[r]^d & K_1\ar[d]^{F_1} \ar[r]^d & K_0\ar@{=}[d] \ar[r]^{\varepsilon} & k\ar@{=}[d] \ar[r] & 0\\
            \cdots\ar[r] & S\otimes(S^{+})^{\otimes 2}\ar[r]^{\partial_2}            & S\otimes(S^{+}) \ar[r]^{\partial_1} & S \ar[r]^{\varepsilon}       & k \ar[r]           & 0}
$$
where the map $d=d_1+d_2+\cdots d_n$ is defined in Section 3 and $\partial_i(s_0\otimes s_1\otimes\cdots\otimes s_i) = \sum_{j=0}^{i-1}(-1)^js_0\otimes\cdots\otimes s_js_{j+1}\otimes\cdots\otimes s_i$ is defined in Section 4. Let $\Phi(\cdots 1_i\cdots)$ where 1 is in the $i$th position and 0 in all other positions denote the basis element of $K_1$, $\Phi(\cdots 1_i\cdots 1_j\cdots)$ (respectively $\Phi(\cdots 2_i\cdots) \text{ for } i\leq t$) where 1 is in the $i$th and $j$th positions $(i\neq j)$, and 0 in all other positions (respectively a 2 in the $i$th position and 0 in all other positions) denote the basis element of $K_2$. Let
\begin{align*}
F_1(\Phi(\cdots 1_i\cdots))&= 1\otimes x_i,\\
F_2(\Phi(\cdots 2_i\cdots))&= \sum_{a_i = 0}^{N_i-2}x_i^{a_i}\otimes x_i\otimes x_i^{N_{i}-a_{i}-1},\\
F_2(\Phi(\cdots 1_i\cdots 1_j\cdots))&= 1\otimes x_j\otimes x_i - q_{ji}\otimes x_i\otimes x_j
\end{align*}
\indent We want to provide a chain map $F_{\bullet} : K_{\bullet}\rightarrow S\otimes (S^+)^{\otimes \bullet}$ by extending $F_1, F_2$ to maps $F_i : K_i\rightarrow S\otimes (S^+)^{\otimes i}, i\geq 1$. This can be done by showing that the two nontrivial squares in the above diagram commute.\\
\indent Consider, 
\begin{align*}
d(\Phi(\cdots 1_i\cdots))&= (d_1+\cdots + d_i+\cdots + d_n)(\Phi(\cdots 1_i\cdots))\\
&=x_i\Phi(\cdots 0_i\cdots)\\
&=x_i\\\\
\partial_1\circ F_1(\Phi(\cdots 1_i\cdots))&=\partial_i(1\otimes x_i)\\
&=1\cdot x_i\\
&=x_i
\end{align*}
Thus, we have $d=\partial_1\circ F_1$. Similarly, we can check that $F_1\circ d = \partial_2\circ F_2$.\\
\indent Hence, two nontrivial squares in the above diagram commute. So by the Comparison Theorem [14]  there exists a chain map $F_{\bullet} : K_{\bullet}\rightarrow S\otimes (S^+)^{\otimes \bullet}$ that induces an isomorphism on cohomology.\\
\indent We now verify that the maps $F_1, F_2$ give the desired identifications. Here we use the definition in (\ref{E2ch5}) to represent the function $\xi_i$ on the reduced bar complex, $\xi_i(1\otimes x^a\otimes x^b):= \xi_i(x^a\otimes x^b)$. Then
\begin{eqnarray*}
&&\hspace{-1.72in}F_2^{*}(\xi_i)(\Phi(\cdots 2_i\cdots))=\xi_i(F_2(\Phi(\cdots 2_i\cdots)))\\
&=&\xi_i(\sum_{a_i = 0}^{N_i-2}x_i^{a_i}\otimes x_i\otimes x_i^{N_{i}-a_{i}-1})\\
&=&\sum_{a_i = 0}^{N_i-2}\varepsilon(x_i^{a_i})\xi_i(1\otimes x_i\otimes x_i^{N_{i}-a_{i}-1})\\
&=&\xi_i(x_i\otimes x_i^{N_{i}-1})\\
&=&1
\end{eqnarray*}
Similarly, we can check that $F_2^{*}(\xi_i)(\Phi(\cdots 1_i\cdots 1_j\cdots)) = 0$ for all $i, j$ and $F_2^{*}(\xi_i)(\Phi(\cdots 2_j\cdots)) = 0$ for all $j\neq i$.\\
Therefore, $F_2^{*}(\xi_i)$ is the dual function to $\Phi(\cdots 2_i\cdots)$ which is precisely $\xi_i$.\\
\end{proof}
\indent In the same manner, we identify the elements $\eta_i$ defined above with functions at the chain level in cohomology. For that define
$$\eta_i(x^a) =
\begin{cases}
1, \text{ if } x^a=x_i\\
0, \text{ otherwise }\\
\end{cases}
$$
The functions $\eta_i$ represent a basis of H$^1(S,k)\simeq$ Hom$_k(S^{+}/(S^+)^2,k)$. Consider,
\begin{eqnarray*}
&&\hspace{-1.75in}F_1^{*}(\eta_i)(\Phi(\cdots 1_j\cdots))=\eta_i(F_1(\Phi(\cdots 1_j\cdots)))\\
&=&\eta_i(1\otimes x_j)\\
&=&\eta_i(x_j)\\
&=&\begin{cases}
1,  \text{ if } j=i\\
0,  \text{ otherwise }
\end{cases}\\
&=&\delta_{ij}
\end{eqnarray*}
Thus $F_1^{*}(\eta_i)$ is the dual function to $\Phi(\cdots 1_i\cdots)$. Therefore $\eta_i$ and $\hat{\eta_i}$ coincide as elements of H$^1(S,k)$ where $\hat{\eta_i}$ is a 1-cocycle of $A$.\\
\begin{theorem}
The cohomology algebra {\em H}$^{*}(A,k)$ is finitely generated.
\end{theorem}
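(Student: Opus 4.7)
The plan is to apply the finite generation lemma (Lemma 2.3) to the May spectral sequence (5.1) and then invoke Evens' Proposition 2.2 to transfer finiteness from the associated graded back to $H^*(A,k)$ itself. The core idea is to produce a finitely generated subalgebra of $H^*(A,k)$ whose image on the $E_1$-page makes $H^*(S,k)$ into a Noetherian module over it.

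\medskip

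Let $C^{*,*}$ denote the bigraded subalgebra of $E_1^{*,*} = H^*(\mathrm{Gr}\,A,k) = H^*(S,k)$ generated by the classes $\hat{\zeta}_i$ for $1\le i\le t$, each sitting in bidegree $(p_i,2-p_i)$. By the preceding theorem together with Theorem \ref{Tch3}, $\hat{\zeta}_i$ coincides with $\xi_i$, and the $\xi_i$ satisfy the skew-commutation relations $\xi_i\xi_j = q_{ji}^{N_iN_j}\xi_j\xi_i$. Hence $C^{*,*}$ is a quantum polynomial ring in $t$ variables, which is (left and right) Noetherian by standard skew-polynomial arguments. Moreover the preceding theorem tells us that every element of $C^{*,*}$ is a permanent cycle in the spectral sequence, and the 2-cocycles $\zeta_i\in H^2(A,k)$ constructed in Section 4 supply lifts of these generators to the filtered algebra $H^*(A,k)$.

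\medskip

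Next I verify that $E_1^{*,*} = H^*(S,k)$ is Noetherian as a $C^{*,*}$-module. By Theorem \ref{Tch3}, $H^*(S,k)$ has $k$-basis the monomials $\xi_1^{b_1}\cdots\xi_t^{b_t}\eta_1^{c_1}\cdots\eta_n^{c_n}$ with $b_i\ge 0$ and $c_i\in\{0,1\}$. Regarded as a module over $C^{*,*}$, it is thus free of rank $2^n$, with a $C^{*,*}$-basis given by the $2^n$ monomials in the $\eta_i$. A finitely generated module over a (left) Noetherian ring is (left) Noetherian, so $E_1^{*,*}$ is Noetherian over $C^{*,*}$. Combined with the permanent cycle assertion, the hypotheses of Lemma \ref{Lch2}(a) are in place.

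\medskip

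Applying Lemma \ref{Lch2}(a) gives that $E_\infty^*$ is Noetherian as a module over $\mathrm{Tot}(C^{*,*})$. Since the classes $\zeta_i\in H^2(A,k)$ from Section 4 were shown to satisfy $\zeta_i\in F^{p_i}H^2(A,k)\setminus F^{p_i+1}H^2(A,k)$, the subalgebra $\tilde C\subseteq H^*(A,k)$ they generate inherits a decreasing filtration whose associated graded is precisely $C^{*,*}$. Likewise $H^*(A,k)$ carries the filtration induced from the spectral sequence, whose associated graded is $E_\infty^*$. Proposition \ref{Pch2} then transfers the Noetherian conclusion: $H^*(A,k)$ is Noetherian as a module over $\tilde C$. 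Since $\tilde C$ is finitely generated as a $k$-algebra (by the finitely many $\zeta_i$), we conclude that $H^*(A,k)$ is a finitely generated $k$-algebra.

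\medskip

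The main technical obstacle I foresee is verifying the vanishing hypothesis of Lemma \ref{Lch2} that, for each fixed $q$, the bigraded piece $C^{p,q}$ vanishes for $p$ sufficiently large. Because $\xi_i$ has bidegree $(p_i,2-p_i)$, this requires choosing the PBW filtration via a weight vector $\omega$ so that every $p_i$ is strictly larger than $2$; then each factor in a monomial contributes negatively to $q$, and fixing $q$ bounds the monomial length (hence $p$). A secondary, more routine issue is confirming the finite-filtration hypothesis needed for Proposition \ref{Pch2}, namely that $F^iH^n(A,k)=0$ for $i$ large; this follows from the boundedness of the filtration on the reduced bar complex in each degree, via convergence of the May spectral sequence.
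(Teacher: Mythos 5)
Your argument follows the paper's proof essentially line-for-line: take the subalgebra of $E_1^{*,*}=\mathrm{H}^*(S,k)$ generated by the permanent cycles $\xi_i\cong\hat\zeta_i$ (the paper calls it $D^{*,*}$), note it is a Noetherian quantum polynomial ring over which $E_1^{*,*}$ is free of finite rank by Theorem~\ref{Tch3}, apply Lemma~\ref{Lch2}(a) to get $E_\infty^*$ Noetherian over $\mathrm{Tot}(D^{*,*})$, and lift to $\mathrm{H}^*(A,k)$ via Proposition~\ref{Pch2}. The technical worry you flag about $C^{p,q}$ vanishing for large $p$ is legitimate and the paper is silent on it, but it is resolved by scaling the weight vector $\omega$ (which does not change $\mathrm{Gr}\,A$) so that each $p_i=N_i\omega_i>2$.
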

\begin{proof}
Let $E_1^{*,*} \Longrightarrow$ H$^{*}(A,k)$ be the May spectral sequence and $D^{*,*}$ be the bigraded subalgebra of $E_1^{*,*}$ generated by the elements $\xi_i$. So by the above discussion $D^{*,*}$ consists of permanent cycles and $\xi_i$ is in bidegree $(p_i,2-p_i)$. Moreover, $D^{*,*}$ is Noetherian since it is a quantum polynomial algebra in $\xi_i$ [11]. By Theorem \ref{Tch3} the algebra $E_1^{*,*}$ is generated by $\xi_i$ and $\eta_i$ where the generators $\eta_i$ are nilpotent. Since $D^{*,*}$ is a subalgebra of $E_1^{*,*}$, we get an inclusion map $f: D^{*,*}\rightarrow E_1^{*,*}$ making $E_1^{*,*}$ a module over $D^{*,*}$. Hence, $E_1^{*,*}$ is a finitely generated module over $D^{*,*}$ and is generated by $\eta_1,\cdots,\eta_n$. Therefore, by Lemma \ref{Lch2},  $E_\infty^{*}$ is a Noetherian Tot($D^{*,*}$)-module. But $E_\infty^{*}\cong Gr$ H$^{*}(A,k)$ [18]. Thus, $Gr$ H$^{*}(A,k)$ is a Noetherian Tot($D^{*,*}$)-module and hence is finitely generated. Therefore, H$^{*}(A,k)$ is finitely generated.\\
\end{proof}
Thus, this leads us to the question whether H$^{*}(A,M)$ is a finitely generated module over H$^{*}(A,k)$ where $M$ is a finitely generated $A$-module? This is true in special cases for e.g. 1) $A$ is a finite dimensional Hopf algebra [16], 2) $A$ is restricted enveloping algebra of restricted Lie superalgebras [1], 3) $A$ is a Frobenius-Lusztig kernel [6] and 4) $A$ is restricted enveloping algebra of classical Lie superalgebras [15].\\\\
\end{section}
\noindent{\bf Acknowledgement:} This paper is based on author's PhD thesis. The author would like to thank his advisor Prof. Sarah Witherspoon for all her support, encouragement and guidance.\\

\end{document}